\newtheorem{thm}{Theorem}
\newtheorem{cor}[thm]{Corollary}
\newtheorem{lemma}[thm]{Lemma}
\newtheorem{prop}[thm]{Proposition}
\newcommand{\R}{\mathbb{R}}
\newcommand{\E}{\mathbb{E}}
\newcommand{\M}{\mathbb{M}}
\newcommand{\Prob}{\mathbb{P}}
\newcommand{\C}{\mathbb{C}}
\DeclareMathOperator{\tr}{tr}
\newcommand{\inprod}[2]{\left\langle #1, #2 \right\rangle}
\newcommand{\abs}[1]{\left\vert #1 \right\vert}
\newcommand{\norm}[1]{\left\Vert #1 \right\Vert}
\newcommand{\net}{\mathcal{N}}
\author{Mark W.\ Meckes}
\author{Stanis{\l}aw J.\ Szarek }
\email{mark.meckes@case.edu}
\email{szarek@cwru.edu}
\address{Department of Mathematics,
Case Western Reserve University,
Cleveland, OH 44106-7058,~U.S.A.}
\address{Department of Mathematics,
Case Western Reserve University,
Cleveland, OH 44106-7058,~U.S.A.  
and 
Universit\'e Pierre et Marie Curie,
Institut de Math\'ematiques de Jussieu (Equipe d'Analyse Fonctionnelle), BC 247,
4 Place Jussieu,
75252 Paris Cedex 05,~France
}
\title[Concentration for noncommutative polynomials]{Concentration for
  noncommutative polynomials in random matrices}
\begin{document}

\begin{abstract}
We present a concentration inequality for linear functionals of
noncommutative polynomials in random matrices.  Our hypotheses cover
most standard ensembles, including Gaussian matrices, matrices with
independent uniformly bounded entries and unitary or orthogonal
matrices.
\end{abstract}

\maketitle


\section{Introduction} 
The starting point of this paper was an inquiry of W.\ Bryc concerning 
almost sure convergence for certain non-Gaussian matrix models in free 
probability.  Almost sure convergence questions often reduce to 
concentration inequalities, which may be interesting in their own 
right, and our purpose is to present one such inequality.

Our approach is as follows. We start by defining the \emph{convex 
  concentration property} (CCP) of normed-space-valued random 
variables.  When specialized to random matrices, the class CCP 
contains most standard ensembles, in particular the (appropriately 
normalized) Wigner-type matrices with independent bounded entries that 
were the object of Bryc's inquiry.  Then we state and prove a 
concentration inequality for noncommutative polynomials in independent 
random matrices verifying the CCP. 

This approach is inspired by the results of M.\ Talagrand
\cite{Ta0,Ta1,Ta2} on concentration of measure in product spaces.
These tools were first adapted to the random matrix context by
Guionnet and Zeitouni in \cite{GZ} and by Krivelevich and Vu in
\cite{KrivV}, with subsequent applications in \cite{AKV,MM}.  However,
various features of the present setup (noncommutativity,
non-selfadjointness, the absence of the Lipschitz property in
polynomials of degree greater than $1$) do not fit into the standard
framework and, consequently, a few additional tricks will be required.


\section{Convex concentration property}

We say that a random vector $X$ in a normed space $V$ satisfies the
(subgaussian) \emph{convex concentration property} (CCP), or is in the
class CCP, if
\begin{equation}\label{E:ccp}
\Prob \bigl[ \abs{f(X)-\M f(X)} \ge t\bigr] \le K e^{-\kappa t^2} 
\end{equation}
for every $t>0$ and every convex $1$-Lipschitz function $f:V \to \R$,
where $K,\kappa>0$ are constants (parameters) independent of $f$ and
$t$, and $\M$ denotes a median of a random variable.  Even though not
explicitly defined, this property already made an appearance in
\cite{Ta0}.  The class CCP enjoys various stability properties, for
example if $X, Y$ satisfy the CCP, so does their concatenation $(X,Y)$
(as follows from the proof of \cite[Proposition 1.11]{Ledoux}).
Clearly, various generalizations of the concept are possible. For
example one may consider tail behaviors other than subgaussian, or
allow other classes of test functions $f$; see, e.g., \cite{adamczak}.

While the subgaussian tail condition in \eqref{E:ccp} may appear
stringent, it is verified by many natural classes of multivariate
distributions.  For example, if $V=\R^N$ and the components $X_i$ are
independent normal random variables with uniformly bounded variances,
or if the random variables $(X_i-\E X_i)$ are uniformly bounded ($\E$
stands for the expected value of a random variable), then $X$
satisfies the CCP. Examples with dependent components include $X$
uniform on $\sqrt{N}S^{N-1}$, or with a density proportional to
$e^{-u(x)}$, where the Hessian of $u$ verifies $D^2u\ge c I$, $c >
0$. See \cite{Ledoux} for multiple proofs of all these statements and
much more information, and \cite{adamczak} for a discussion of various
fine points concerning the class CCP.  Here we will just mention that
the validity of the first example is a consequence of
Borell-Sudakov-Tsirelson Gaussian isoperimetric inequality, the second
one is the primary instance of Talagrand's approach to concentration
on product spaces, the third one follows from Paul L\'evy's spherical
isoperimetric inequality, and the last is a consequence of the theory
of logarithmic Sobolev inequalities.  We emphasize that the common and
crucial feature of all these examples, and of others that will follow,
is \emph{dimension independence}: while the parameters $K,\kappa$ in
\eqref{E:ccp} may depend on the characteristics of the family in
question (for instance, on the bound on variances implicit in the
first example above, or on the value of $c$ in the last example), they
do not depend on the dimension of the underlying vector space.

As is well-known and easy to check, a concentration inequality of the
type  \eqref{E:ccp} implies that the mean and median of $f(X)$
differ by at most a constant (depending only on the parameters $K,
\kappa$, see, e.g., \cite[Section 1.3]{Ledoux} or \cite[Proposition
V.4]{MS}); it follows that concentration about the median is
equivalent to concentration about the mean up to modification of the
constants in \eqref{E:ccp}. At different points in the results and
proofs below it will be convenient to work with either the mean or the
median.


\section{Matrix ensembles: the main result}

We denote by $M_n$ the space of $n\times n$ complex matrices and by
$M_n^{sa}$ its (real vector) subspace of Hermitian matrices, and by
$\norm{A}_p := \bigl(\tr (A^*A)^{p/2}\bigr)^{1/p}$ the Schatten
$p$-norm of a matrix $A$; the limiting case $p=\infty$ corresponds to
the operator (or spectral) norm, while $p=2$ leads to the
Hilbert-Schmidt (or Frobenius) norm.  We also denote by
$\norm{\cdot}_p$ the $L_p$-norm of a (real or complex) random
variable, or the $\ell_p$-norm of a vector in $\R^N$ or $\C^N$.  Below
and in what follows $C, C_1,C', c$ etc. stand for positive numerical
constants, whose value may change from line to line.  Similarly (for
example) $c_{d,m}$ will denote a positive constant which may depend on
the parameters $d$ and $m$, but not on the underlying dimension.  Such
constants will in general depend implicitly on the parameters $K,
\kappa$ in \eqref{E:ccp} and, if applicable, on other constants
appearing in the hypotheses of a particular statement; this dependence
will be straightforward to make explicit but for the sake of
simplicity we have mostly not chosen to do so here.

\medskip

\begin{thm}\label{T:main}
  Let $X_1,\dotsc,X_m \in M_n$ be independent centered random matrices 
  which satisfy the convex concentration property (with respect to the 
  Hilbert-Schmidt norm on $M_n$) and let $d\ge 1$ be an 
  integer.  Let $P$ be a noncommutative $*$-polynomial in $m$ 
  variables of degree at most $d$, normalized so that its coefficients 
  have modulus at most $1$.  Define the complex random variable 
  \[
  Z_P = \tr P\left(\frac{X_1}{\sqrt{n}}, \dotsc,
  \frac{X_m}{\sqrt{n}}\right).
  \]
  Then,  for $t>0$, 
  \[
  \Prob \left[ \abs{Z_P- \E Z_P} \ge t \right]
  \le C_{m,d} \exp \bigl[-c_{m,d} \min \bigl\{t^2,n t^{2/d}\bigr\}\bigr].
  \]
  
 \medskip \noindent The conclusion holds also for non-centered 
 random matrices if --- when $d\ge 2$ --- we assume that $\norm{\E 
   X_j}_{2(d-1)}\le Cn^{d/2(d-1)}$ for all $j$. 
\end{thm}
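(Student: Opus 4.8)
The plan is to reduce the assertion to a concentration estimate for $\tr M\bigl(X_1/\sqrt n,\dots,X_m/\sqrt n\bigr)$ for a single monomial $M$, and then to prove that by conditioning on all but one of the matrices and truncating a suitable Schatten norm of the remaining one. First I would dispose of the non-centered case: write $X_j=A_j+X_j^{0}$ with $A_j=\E X_j$ and $X_j^{0}$ centered (still in the class CCP, with the same parameters), and expand $P(X_1/\sqrt n,\dots)$ as a noncommutative polynomial in the $X_j^{0}/\sqrt n$ whose ``coefficients'' are words of length at most $d-1$ in the $A_j/\sqrt n$; the hypothesis $\norm{A_j}_{2(d-1)}\le Cn^{d/2(d-1)}$ together with the generalized Hölder inequality for Schatten norms shows these coefficients are controlled in the sense needed below, so it suffices to treat the centered case. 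Since $P$ has at most $C_{m,d}$ monomials, each with a coefficient of modulus $\le1$, the triangle inequality reduces matters to a single monomial at the expense of worse constants of the same type; and $\Re Z_P$, $\Im Z_P$ may be treated separately, while concentration about $\M$ and about $\E$ are interchangeable as recalled above.

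The next ingredient is a family of a priori tail bounds drawn from the CCP itself. If $X\in M_n$ is centered and in the class CCP (w.r.t.\ $\norm{\cdot}_2$), then for every $p\ge2$ the Schatten norm $\norm{\cdot}_p$ is convex and $1$-Lipschitz for $\norm{\cdot}_2$, so $\norm{X}_p$ concentrates at scale $O(1)$ about its median; an $\varepsilon$-net argument applied to the convex $1$-Lipschitz functions $A\mapsto\norm{Av}$ gives $\Prob[\norm{X}_\infty\ge C_0\sqrt n]\le Ce^{-cn}$, and hence $\E\norm{X}_p^{p}\lesssim n^{p/2+1}$ and $\M\norm{X}_p\lesssim n^{1/2+1/p}$. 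Taking $p=2(d-1)$, we obtain for $R$ above this baseline
\[
\Prob\bigl[\norm{X}_{2(d-1)}\ge R\,n^{d/2(d-1)}\bigr]\ \le\ C\exp\!\bigl(-cR^{2}n^{d/(d-1)}\bigr).
\]
It is the exponent $n^{d/(d-1)}$ here — strictly larger than the $n$ one would get from the operator norm — that will ultimately produce the factor $n$ in $n\,t^{2/d}$ rather than a weaker power.

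Now fix a centered monomial $M$ of degree $d\ge2$. If some variable $Y_a$ occurs in $M$ with multiplicity one, I would condition on the other matrices: $\tr M(X_1/\sqrt n,\dots)$ is then a real-linear function of $X_a$ with coefficient matrix of Hilbert–Schmidt norm $n^{-d/2}\norm{W}_2$, where $W$ is the length-$(d-1)$ word in the remaining matrices obtained by deleting $Y_a$, and moreover its conditional mean vanishes (whence so does $\E Z_P$) because $X_a$ is centered. Applying the CCP to this linear function gives $\Prob[\,\abs{\tr M}\ge s\mid\text{rest}\,]\le Ke^{-\kappa s^{2}n^{d}/\norm{W}_2^{2}}$, and by the generalized Hölder inequality $\norm{W}_2\le\prod_{k}\norm{X_{i_k}}_{2(d-1)}$, which, off an event of probability at most $C_m\exp(-cR^{2/(d-1)}n^{d/(d-1)})$, is at most $R\,n^{d/2}$. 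Integrating out the remaining matrices therefore yields
\[
\Prob\bigl[\,\abs{\tr M(X_1/\sqrt n,\dots)}\ge s\,\bigr]\ \le\ Ke^{-\kappa s^{2}/R^{2}}+C_m\exp\!\bigl(-cR^{2/(d-1)}n^{d/(d-1)}\bigr),
\]
and optimizing $R$ (a fixed multiple of the baseline when $s\le n^{d/2(d-1)}$, and $R\asymp\bigl(s^{2}/n^{d/(d-1)}\bigr)^{(d-1)/2d}$ when $s\ge n^{d/2(d-1)}$, so that the two exponents balance at $n\,s^{2/d}$) gives exactly the bound $C_{m,d}\exp[-c_{m,d}\min\{s^{2},n\,s^{2/d}\}]$.

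It remains to treat monomials in which every variable occurs with multiplicity at least two — in particular $*$-words in a single matrix. After conditioning on the other matrices, such a monomial becomes a polynomial of degree $k\ge2$ in one matrix $X_a$, with matrix coefficients whose Schatten norms are again controlled by the a priori estimates. Here one can no longer apply the CCP directly, and I would instead truncate $\norm{X_a}_{2(d-1)}$ (and $\norm{X_a}_\infty$) as above and decompose the resulting bounded, non-convex function into a difference of two convex ones, using the spectral (and, for the non-self-adjoint case, holomorphic) functional calculus — for $k=2$ this is exactly the route from the CCP to a Hanson–Wright-type inequality, and for general $k$ an analogous device — taking care that the convexifying term, which must be dictated by the size of the Hessian on the truncation region, does not inflate the Lipschitz constant beyond $O(R^{d-1})$. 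Applying the CCP to each convex piece and optimizing $R$ as before produces the same bound, which, summed over the at most $C_{m,d}$ monomials, gives the theorem. I expect this last step — simultaneously controlling the Lipschitz constant after convexification, handling the repeated occurrences of a variable, and coping with the absence of self-adjointness — to be where essentially all the work lies; the multiplicity-one case above is clean precisely because conditioning linearizes it.
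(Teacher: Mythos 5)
Your reduction to a single monomial, your a priori bounds ($\M\norm{X}_\infty\lesssim\sqrt n$ via a net, hence $\E\norm{X}_{2(d-1)}\lesssim n^{d/2(d-1)}$ by interpolation, hence the tail $\Prob[\norm{X}_{2(d-1)}\ge Rn^{d/2(d-1)}]\le C\exp(-cR^2n^{d/(d-1)})$), and your final optimization of the truncation level $R$ all match the paper's computations, and the exponent bookkeeping in your multiplicity-one case is correct. That case (conditioning on the other matrices so that $\tr M$ becomes linear in $X_a$, then applying the CCP directly and using H\"older on the word $W$) is a clean argument not in the paper, and it does dispose of those monomials.

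However, there is a genuine gap at the decisive step: monomials in which every variable occurs with multiplicity at least two --- which includes the most basic instance $\tr(X/\sqrt n)^d$ for $d\ge2$ --- are not actually handled. Your proposal there is to truncate and then write the resulting non-convex, non-self-adjoint degree-$k$ matrix polynomial (with interspersed matrix coefficients coming from the conditioning) as a difference of two convex Lipschitz functions via ``spectral and holomorphic functional calculus''; but no such decomposition is exhibited, and it is far from clear one exists with the required Lipschitz constant. Even for $k=2$, convexifying $X\mapsto\tr(XBXB')$ by adding a multiple of $\norm{X}_2^2$ forces the truncation to be in the Hilbert--Schmidt norm and ties the Lipschitz constant to $\norm{B}_\infty\norm{B'}_\infty\norm{X}_2$, which already strains the target exponent; for $k\ge3$ with a non-Hermitian $X$ there is no functional calculus that linearizes the problem at all. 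This missing construction is exactly what the paper supplies by a completely different route: a $dn\times dn$ cyclic block dilation $\mathcal X$ with $\tr\mathcal X^d=d\,\tr P$, a discrete polarization identity reducing $\tr(X/\sqrt n)^d$ for non-Hermitian $X$ to $\tr(Y_\theta/\sqrt n)^d$ for Hermitian $Y_\theta=e^{-i\theta}X+e^{i\theta}X^*$, and then the classical fact that $A\mapsto\tr\phi(A)$ is convex for convex $\phi$ (splitting $\lambda^d=\lambda_+^d-\lambda_-^d$ for odd $d$), combined with a convexity-preserving Lipschitz extension off the truncation set $\{\norm{A}_{2(d-1)}<a\}$. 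Note also that your reduction of the non-centered case by expanding in $X_j-\E X_j$ produces monomials with fixed matrix coefficients, i.e.\ it lands you back in the unhandled case rather than simplifying it; the paper instead absorbs $\norm{\E X_j}_{2(d-1)}$ directly into the truncation estimate. Until the multiplicity-$\ge2$ case is made concrete, the proof is incomplete at its core.
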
 

\bigskip

It is a standard observation that, by integration by parts on the one 
hand and the Bienaym\'e-Chebyshev-Markov inequality on the other hand, 
a tail bound as in Theorem \ref{T:main} is equivalent to a bound on 
the growth of $L_p$-norms. 

\smallskip

\begin{cor}\label{T:main-moments} 
Let $Z_P$ be as in Theorem \ref{T:main}. Then for $q\ge 1$, 
\[
\norm{Z_P-\E Z_P}_q \le C_{m,d}' \max \left\{ \sqrt{q}, 
  \left(\frac{q}{n}\right)^{d/2}\right\}.
\]
\end{cor}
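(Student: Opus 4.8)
The plan is to deduce Corollary~\ref{T:main-moments} from Theorem~\ref{T:main} by the standard passage from tail bounds to moment bounds. Fix $q\ge 1$ and write $W := Z_P - \E Z_P$. Since $W$ is complex-valued, first reduce to the real case by noting $\norm{W}_q \le \norm{\Re W}_q + \norm{\Im W}_q$ and that each of $\Re W$, $\Im W$ inherits, up to adjusting $C_{m,d}, c_{m,d}$, the same tail bound as $W$ (the event $\{|\Re W|\ge t\}$ is contained in $\{|W|\ge t\}$, and $\E \Re W = \E \Im W = 0$). So it suffices to bound $\norm{W}_q$ for a real centered random variable $W$ satisfying
\[
\Prob[\abs{W}\ge t] \le C_{m,d} \exp\bigl[-c_{m,d}\min\{t^2, n t^{2/d}\}\bigr].
\]

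Next I would use the layer-cake / integration-by-parts formula $\E\abs{W}^q = q\int_0^\infty t^{q-1}\Prob[\abs{W}\ge t]\,dt$ and split the integral at the crossover point $t_0$ where the two terms in the minimum agree, namely $t_0^2 = n t_0^{2/d}$, i.e.\ $t_0 = n^{d/(2(d-1))}$ (with the convention that for $d=1$ the second branch never dominates and one simply has a pure subgaussian tail). On the range $t\le t_0$ the tail is $\le C_{m,d}e^{-c_{m,d}t^2}$, contributing $\lesssim_{m,d} q^{q/2}$ to $\E\abs{W}^q$ after the substitution $u = c_{m,d}t^2$ and using $\Gamma(q/2+1)^{1/q}\lesssim\sqrt{q}$; on the range $t\ge t_0$ the tail is $\le C_{m,d}e^{-c_{m,d} n t^{2/d}}$, and the substitution $u = c_{m,d} n t^{2/d}$ turns the integral into a Gamma-type integral yielding a contribution $\lesssim_{m,d} (q/n)^{qd/2}\,\Gamma(qd/2+1)^{\text{-like factor}}$, i.e.\ after taking $q$-th roots a term $\lesssim_{m,d}(q/n)^{d/2}$. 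Taking $q$-th roots of the sum of the two contributions and using $(a+b)^{1/q}\le a^{1/q}+b^{1/q}$ gives $\norm{W}_q \lesssim_{m,d}\sqrt{q} + (q/n)^{d/2} \le 2\max\{\sqrt q, (q/n)^{d/2}\}$, as desired. One should also check the elementary point that for $q\ge 1$ the factor $q$ in front of the integral and the $\Gamma$-function values are absorbed into the constant: $q\,\Gamma(q/2+1) = \Gamma(q/2+1)\cdot q \le C^q \Gamma(q/2+2)$, whose $q$-th root is $O(\sqrt q)$ by Stirling, and similarly on the other branch.

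The main obstacle is purely bookkeeping rather than conceptual: one must track how the constants $C_{m,d}$ in the tail bound enter the final multiplicative constant $C'_{m,d}$, and handle the mild annoyance that the $q$-th root of $C_{m,d}$ is not bounded uniformly in $q$ for $q<1$ — but since the statement only claims $q\ge 1$, and for $q\ge 1$ one has $C_{m,d}^{1/q}\le \max\{C_{m,d},1\}$, this is harmless. The other point requiring a line of care is the crossover: when $q$ is small the first term $\sqrt q$ dominates for all reasonable $n$, while for $q\gtrsim n$ the second term takes over, and the split-at-$t_0$ computation must be arranged so that in each regime the dominant term comes out with the right power; choosing the split point exactly at $t_0 = n^{d/(2(d-1))}$ makes both pieces come out cleanly and is the natural choice. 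Everything else is a routine Gamma-function estimate.
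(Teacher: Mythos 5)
Your proof is correct and follows exactly the route the paper intends: the paper states Corollary \ref{T:main-moments} as "a standard observation" obtained from the tail bound of Theorem \ref{T:main} by integration by parts, and your layer-cake computation with the split at the crossover point $t_0=n^{d/(2(d-1))}$, together with the Gamma-function and Stirling estimates, is precisely that standard argument carried out in detail. (The only superfluous step is the reduction to real and imaginary parts, since $\E\abs{W}^q=q\int_0^\infty t^{q-1}\Prob[\abs{W}\ge t]\,dt$ already applies to the modulus of a complex random variable; but this is harmless.)
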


\noindent {\bf Remarks:} 
\begin{enumerate} 
\item The hypotheses of Theorem \ref{T:main} cover Wigner-type 
  matrices with independent Gaussian or independent bounded entries, 
  but not arbitrary independent subgaussian entries (see \cite{adamczak} 
  and its references; note that CCP clearly implies that the entries {\em are} 
  subgaussian). However, 
  independent entries satisfying a logarithmic Sobolev inequality, or 
  more generally a quadratic transportation cost inequality, are 
  covered (see \cite[Chapters 5-6]{Ledoux}).  Moreover, the hypotheses 
  also cover many cases with dependent matrix entries.  The most 
  notable are the following: 
  \begin{enumerate} 
  \item $X_j$ drawn from an \emph{orthogonal} or \emph{unitary} 
    ensemble, that is with a density w.r.t.\ Lebesgue measure on 
    $M_n^{sa}$ proportional to $e^{-\tr u(X)}$, in the case that $u:\R 
    \to \R$ satisfies $u''\ge c >0$. (This again follows from the theory 
    of logarithmic Sobolev inequalities.) Ensembles of this form are 
    widely studied in the literature (see, e.g., \cite{DG});  
    in the context of nuclear physics this is a more natural class 
    than that of Wigner matrices. 
  \item $X_j$ such that $n^{-1/2}X_j$ is uniformly distributed in the 
    (special) orthogonal or unitary group (see \cite[Section 6]{MS} or 
    \cite[Section 2.1]{Ledoux}). 
  \item $X_j$ uniformly distributed on the (Hilbert-Schmidt) sphere of 
    $M_n^{sa}$ of radius $\sqrt{\frac{n(n+1)}{2}}$ or $n$ in the real 
    case or complex case, respectively; or uniformly distributed on 
    the sphere of radius $n$ 
    (In fact, 
    any $O(n)$ radii would do, but the exact values we cite here 
    appear in a natural way.) 
  \end{enumerate} 

\item A perhaps more natural way to state the bound on $\E X_j$ in the 
  non-centered case (if each $X_j$ is Hermitian) is 
  \[
  \tr \left(\E \frac{X_j}{\sqrt{n}}\right)^{2(d-1)} \le C n.
  \]
  A slightly stronger simple hypothesis is $\norm{\E X_j}_\infty \le 
  C\sqrt{n}$. 

\item It is not strictly necessary that the $X_j$ be independent, only 
  that the joint distribution of $(X_1,\dotsc, X_m) \in 
  \bigoplus_{j=1}^m M_n$ satisfy the convex concentration property, 
  with constants that may depend on $m$. 
 
\item When $d>2$, it suffices for the proof to assume that $X_j$
  satisfies the convex concentration property with respect to the
  Schatten norm $\norm{\cdot}_d$ on $M_n$, but it is not clear
  whether this is a useful observation.

\end{enumerate}


\section{The background and the consequences} 

Here is a consequence of Theorem \ref{T:main} in the spirit of the
original inquiry of Bryc. For simplicity, we state it in the real case
only.

\begin{cor}\label{as} 
Let $X_1,\dotsc,X_m$, $P$ and $Z_P$ be as in Theorem \ref{T:main}, and 
assume further that, for each $j$, $X_j$ is real symmetric and its 
upper-diagonal entries are independent and of unit variance. Then, 
almost surely, 
\[
n^{-1} Z_P \to \tau\bigl(P(a_1, a_2, \ldots , a_m)\bigr),
\] 
where $a_1, a_2, \ldots , a_m$ are free semicircular elements in a
noncommutative probability space $(\mathcal{A}, \tau)$.
\end{cor}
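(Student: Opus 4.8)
The plan is to write $n^{-1}Z_P = n^{-1}\E Z_P + n^{-1}(Z_P-\E Z_P)$ and to establish separately that (i) $n^{-1}\E Z_P \to \tau\bigl(P(a_1,\dots,a_m)\bigr)$ and (ii) $n^{-1}(Z_P-\E Z_P)\to 0$ almost surely. Here, as is standard when discussing almost sure convergence of random matrices of growing size, I regard the matrices $X_j = X_j^{(n)}$ as living on a single probability space --- for instance realized as the $n\times n$ upper-left corners of fixed infinite symmetric arrays with independent entries --- and I assume the entries are centered, which is part of the usual Wigner convention and is in any case needed for the limit to be semicircular; the centered case of Theorem~\ref{T:main} then applies.

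Claim (ii) is where the concentration inequality enters, and it is essentially immediate. Fix $0<\epsilon\le 1$ and apply Theorem~\ref{T:main} with $t=\epsilon n$. Since $n\ge 1$ and $d\ge 1$, one checks that $\min\{(\epsilon n)^2,\, n(\epsilon n)^{2/d}\}\ge \epsilon^2 n^{\gamma}$, where $\gamma := \min\{2,\, 1+2/d\}>1$. Hence
\[
\sum_{n\ge 1}\Prob\left[\,\abs{n^{-1}Z_P - n^{-1}\E Z_P}\ge \epsilon\,\right]
\le \sum_{n\ge 1} C_{m,d}\exp\bigl[-c_{m,d}\,\epsilon^2 n^{\gamma}\bigr] < \infty,
\]
so by the Borel--Cantelli lemma $\limsup_n \abs{n^{-1}Z_P - n^{-1}\E Z_P}\le \epsilon$ almost surely; intersecting these events over $\epsilon = 1/k$, $k\in\N$, yields (ii).

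For claim (i) it suffices, by linearity of $\tr$, $\E$ and $\tau$, to treat a single monomial: for any word $X_{j_1}\cdots X_{j_k}$ with $k\le d$ one must show
\[
\frac1n\,\E\tr\left[\frac{X_{j_1}}{\sqrt n}\cdots \frac{X_{j_k}}{\sqrt n}\right]\longrightarrow \tau(a_{j_1}\cdots a_{j_k}),
\]
where $\tau(a_{j_1}\cdots a_{j_k})$ is the number of non-crossing pair partitions of $\{1,\dots,k\}$ all of whose pairs $\{p,q\}$ satisfy $j_p = j_q$ --- the standard formula for mixed moments of a free semicircular family. This is precisely the assertion that independent Wigner matrices are asymptotically free with semicircular limit distributions; it goes back to Voiculescu in the Gaussian case and is classical in the generality needed here. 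If a self-contained argument is preferred, it follows from the usual moment-method computation: expanding the trace as a sum over closed walks on the complete graph $K_n$, the subgaussian moment bounds furnished by the CCP hypothesis ensure that only walks traversing each edge exactly twice with matching colors survive in the limit $n\to\infty$, and these are enumerated exactly by the colored non-crossing pair partitions above.

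Combining (i) and (ii) gives $n^{-1}Z_P\to\tau\bigl(P(a_1,\dots,a_m)\bigr)$ almost surely. The part requiring genuine work is the convergence of the means in (i), i.e.\ the asymptotic freeness input; Theorem~\ref{T:main} then does all the remaining work, and in fact far more than is needed, since $\gamma>1$ means that any probability bound decaying faster than $n^{-1}$ would already suffice for the Borel--Cantelli step.
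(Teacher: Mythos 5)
Your proof is correct and follows essentially the same route as the paper: the authors likewise treat Corollary~\ref{as} as the folklore combination of the known convergence of expectations $n^{-1}\E Z_P \to \tau(P(a_1,\dots,a_m))$ (Voiculescu for the Gaussian case, Dykema and others for general Wigner-type entries) with the exponential-in-$n$ tail bound obtained from Theorem~\ref{T:main} at $t=\epsilon n$, followed by Borel--Cantelli. Your write-up just makes the summability estimate and the common-probability-space convention explicit, which the paper leaves implicit.
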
 

The connection between random matrices and free probability was 
established in the seminal paper \cite{dvv}, where the weaker 
convergence $n^{-1} \E Z_P \to \tau(P(a_1, a_2, \ldots , a_m))$ was 
shown in the Gaussian case (we refer to \cite{VDN, Guionnet} for more 
background on free probability).  This was generalized to (in 
particular) other Wigner-like ensembles in \cite{dykema}, and 
strengthened in various ways in \cite{ht, schultz, cdm, MSp, MSS, 
  Male}.

The fact that the weaker convergence (of expected values) in 
combination with concentration (which was known for Gaussian and some 
other classical ensembles) implies almost sure convergence was 
essentially folklore (see \cite{HP2, HP, DS}): the deviation of 
$n^{-1} Z_P$ from its expected value has a tail that decays (at least) 
exponentially in $n$, hence the Borel-Cantelli lemma applies.  Note 
that rescaling by $n^{-1}$ is appropriate since the noncommutative 
probability context calls for the normalized trace $n^{-1}\tr$. 


The same argument applies to any other ensemble which verifies the CCP
and for which the limit object --- in the (weak) noncommutative
probability sense --- exists. On the other hand, results along the
lines of Corollary \ref{as} can also be proved without Theorem
\ref{T:main}, and in particular under weaker assumptions than
exponential concentration. Theorem 2 of \cite{Oraby} proves what
amounts to the conclusion of Corollary \ref{as} for Wigner matrices
with i.i.d.\ entries with bounded fourth moments; see \cite{Oraby} for
references to earlier results proved under stronger assumptions.  In
addition, concentration inequalities for some noncommutative
functionals of random matrices --- but not polynomials --- appeared
already in \cite{GZ} (Theorem 1.9; the entries are required to satisfy
logarithmic Sobolev inequality).

Finally, let us point out that there is a fairly extensive literature 
on the tail behavior of ``higher order chaoses'' (i.e., polynomials) 
in classical probability, i.e., without focus on the issues related to 
the matrix structure or noncommutativity, for example \cite{dlPMS, 
  Latala, adamczak}. There are also applications of concentration of 
polynomials to combinatorics \cite{Janson,KV,Vu}. 
 

\section{The proof: a special case}  
 
\noindent Theorem \ref{T:main} will be deduced from the special case of 
a power of a single Hermitian random matrix. 

\medskip

\begin{prop}\label{T:Hermitian} 
  Let $X\in M_n^{sa}$ be a random Hermitian matrix which satisfies the 
  convex concentration property (with respect to the Hilbert-Schmidt 
  norm on $M_n^{sa}$), let $d\ge 1$ be an integer, and suppose --- when 
  $d \ge 2$ --- that $\tr \left(\E \frac{X}{\sqrt{n}}\right)^{2(d-1)} 
  \le C n$.  Then for $t>0$, 
\[ 
\Prob \left[ \abs{\tr \left(\frac{X}{\sqrt{n}}\right)^d - \M \tr
    \left(\frac{X}{\sqrt{n}}\right)^d} \ge t \right] \le C \exp
\bigl[-\min \bigl\{c^d t^2, c n t^{2/d}\bigr\}\bigr].
\]
\end{prop}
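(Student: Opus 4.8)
The plan is an induction on $d$, after two reductions: peeling off $\E X$, and replacing the non-convex, non-Lipschitz functional $A\mapsto\tr(A/\sqrt n)^d$ by a convex one. For $d=1$ there is nothing to prove: $A\mapsto\tr(A/\sqrt n)=\inprod{A/\sqrt n}{I}$ is affine and, since $\abs{\tr A}\le\sqrt n\,\norm A_2$, it is $1$-Lipschitz for the Hilbert--Schmidt norm, so \eqref{E:ccp} applies and even gives $Ke^{-\kappa t^2}$ (which dominates the claimed bound since $nt^{2/d}=nt^2\ge t^2$). For $d\ge2$ I would first split $X=X_0+\E X$ with $X_0$ centered: expanding $\tr(X/\sqrt n)^d$ by the noncommutative binomial formula yields $\tr(X_0/\sqrt n)^d$ together with boundedly many traces of words of length $\le d$ in $X_0/\sqrt n$ and $\E X/\sqrt n$ containing at least one factor $\E X/\sqrt n$; by H\"older for Schatten norms and the hypothesis $\norm{\E X/\sqrt n}_{2(d-1)}\le C$, each such word is dominated by a noncommutative polynomial of degree $\le d-1$ in $X_0/\sqrt n$ with bounded coefficients, hence controlled by the inductive hypothesis. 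So it suffices to treat $\E X=0$; write $W:=X/\sqrt n$, which satisfies the CCP with parameters $K,\kappa n$.

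For the centered case, use the Jordan decomposition $W=W_+-W_-$ ($W_\pm\succeq0$), so $\tr W^d=\tr W_+^d+(-1)^d\tr W_-^d$, and study for each sign the functional $g_\pm(W):=(\tr W_\pm^d)^{1/2}$. Two features make this the right object. First, as a function of the eigenvalues $\lambda$ of $W$ it equals $\bigl\|(\max(\pm\lambda_j,0))_j\bigr\|_d^{d/2}$, a \emph{symmetric convex} function (composition of the convex increasing $s\mapsto s^{d/2}$ with the convex, coordinatewise nondecreasing $\ell^d$-norm of the positive parts), so by Davis's theorem $g_\pm$ is convex on $M_n^{sa}$ --- this substitutes for the convexity that $\tr W^d$ lacks when $d$ is odd. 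Second, $\norm{\nabla g_\pm(W)}_2=\tfrac d2\bigl(\tr W_\pm^{\,2d-2}/\tr W_\pm^{\,d}\bigr)^{1/2}\le\tfrac d2\norm W_\infty^{\,(d-2)/2}$, so $g_\pm$ is $\tfrac d2\rho^{(d-2)/2}$-Lipschitz on the convex set $\{\norm W_\infty\le\rho\}$ with a \emph{dimension-free} constant (the square root has absorbed the factor $\sqrt n$ that makes $\tr W^d$ itself too rough). The scheme is then: condition on $\{\norm W_\infty\le\rho\}$, extend $g_\pm$ from this ball to a globally convex, $\tfrac d2\rho^{(d-2)/2}$-Lipschitz function, apply \eqref{E:ccp}, and translate back via $\tr W_\pm^d=g_\pm(W)^2$ on the conditioning event --- using $\M g_\pm(W)\le C_{\kappa,d}\sqrt n$ and an elementary estimate bounding $\abs{a^2-b^2}$ below in terms of $\abs{a-b}$ --- to convert the sub-gaussian deviation of $g_\pm$ into that of $\tr W_\pm^d$.

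Two preliminary estimates feed this. From \eqref{E:ccp} one gets $\M\norm X_\infty\le C_\kappa\sqrt n$ by an $\varepsilon$-net on $S^{n-1}$: for fixed unit $v$, $A\mapsto\inprod{Av}{v}=\inprod{A}{vv^*}$ is linear (hence convex) and $1$-Lipschitz, so sub-gaussian with parameter $\kappa$, and a net of cardinality $e^{O(n)}$ dominates $\norm X_\infty$; similarly $\M\norm X_2\le C_\kappa n$ from sub-gaussianity of the entries $X_{ij}=\inprod{A}{e_ie_j^*}$ (so $\M\norm W_2\le C_\kappa\sqrt n$). With $\rho$ a fixed constant, the first estimate makes $\Prob[\norm W_\infty>\rho]\le Ke^{-c_\kappa n}$, and the scheme above then yields the sub-gaussian branch $C\exp[-c_dt^2]$ in the range where it is the dominant term. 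For large $t$ one argues crudely instead: $\abs{\tr W^d}\le\norm W_d^{\,d}\le\norm W_2^{\,d}$, and since $\norm W_2$ is convex and $1$-Lipschitz, its concentration around $\M\norm W_2\le C_\kappa\sqrt n$ gives $\Prob[\abs{\tr W^d}\ge t]\le Ke^{-c_\kappa nt^{2/d}}$ once $t\gtrsim_\kappa n^{d/2}$ (recalling $\abs{\E\tr W^d}\le C_{\kappa,d}n\ll t$ there). Splicing the two bounds, after adjusting constants, produces $\exp[-\min\{c^dt^2,cnt^{2/d}\}]$.

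The step I expect to be the main obstacle is covering the \emph{intermediate} range of $t$, near the crossover $t\asymp n^{d/(2(d-1))}$ where the two branches meet, since neither a fixed-radius ball nor the crude $\norm W_2$ bound is adequate there. The tension is structural: $\tr W^d$ has gradient of order $\norm W_\infty^{\,d-1}$, so one must work on a ball $\{\norm W_\infty\le\rho\}$; shrinking $\rho$ keeps the Lipschitz constant of the convexified function small, while enlarging $\rho$ is needed to keep $\Prob[\norm W_\infty>\rho]$ below the target. Closing this requires a refinement of the scheme: rather than the Jordan decomposition one uses a positive-definite shift $W+\rho I$ (with $\rho=\rho(t)$ chosen quantitatively and $\norm W_\infty\le\rho$ on the conditioning event), applies the square-root convexification to $\tr(W+\rho I)^d$, and absorbs the lower-order terms $\binom dk\rho^{d-k}\tr W^k$ ($k<d$) from the binomial expansion by feeding them into the already-established lower-degree cases. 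This quantitative balancing of $\rho$ against the induced Lipschitz constant, together with the dimension-free ``square-root convexification'', is the technical core --- and the place where the ``few additional tricks'' promised in the introduction are really needed.
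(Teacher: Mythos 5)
Your architecture is close to the paper's (convexify via the eigenvalue decomposition, truncate, extend by Lemma \ref{T:extension}, apply the CCP, handle large $t$ crudely), and you correctly locate the hard part in the intermediate range $\sqrt{n}\lesssim t\lesssim n^{d/2}$. But the fix you propose there does not close the gap for $d\ge 3$, and the reason is that you truncate in the wrong norm. The gradient of $A\mapsto \tr A^d$ has Hilbert--Schmidt norm exactly $d\norm{A}_{2(d-1)}^{d-1}$, so the natural truncation set is $K_a=\{\norm{X}_{2(d-1)}<a\}$, and this is what the paper uses. The decisive quantitative point is that, by interpolating $\E\norm{X-\E X}_\infty\le C\sqrt n$ against $\E\norm{X-\E X}_2\le Cn$ via H\"older, one gets $\E\norm{X}_{2(d-1)}\le Cn^{d/2(d-1)}$ (this is also where the hypothesis on $\E X$ enters), and since $\norm{\cdot}_{2(d-1)}$ is convex and $1$-Lipschitz for $\norm{\cdot}_2$, the CCP gives $\Prob[\norm{X}_{2(d-1)}\ge bn^{d/2(d-1)}]\le C\exp[-cb^2n^{d/(d-1)}]$ --- note the extra factor $n^{1/(d-1)}$ in the exponent compared with your operator-norm event $\Prob[\norm{W}_\infty>\rho]\le Ce^{-cn\rho^2}$. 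Choosing $b\sim t^{1/d}$ (in normalized units) then balances the two exponents at exactly $nt^{2/d}$. With operator-norm truncation this balancing is impossible: your shift-and-square-root scheme gives, at the crossover $t\sim n^{d/(2(d-1))}$, an exponent of order $t^{2/d}$ (the shift inflates $\M g_\pm$ to $\sim\rho^{d/2}\sqrt n$, so the passage from deviations of $g$ to deviations of $g^2$ becomes extremely lossy), and even truncating the un-square-rooted functional on $\{\norm{W}_\infty\le\rho\}\cap\{\norm{W}_2\le C\sqrt n\}$ and optimizing $\rho$ yields only $n^{(d^2-2d+2)/(d-1)^2}$, strictly below the required $t^2=n^{d/(d-1)}$ once $d\ge3$. (Your scheme does work in full for $d=2$, where $2(d-1)=2$ and no truncation is needed for $g_\pm=\norm{W_\pm}_2$.)

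Two further remarks. First, your reduction to the centered case does not close as stated: expanding $\tr(X_0/\sqrt n+\E X/\sqrt n)^d$ produces traces of mixed words in $X_0$ and the deterministic matrix $\E X$, which are not traces of powers of a single Hermitian matrix, so the inductive hypothesis (this Proposition) does not apply to them; handling general words requires the block-matrix device from the proof of Theorem \ref{T:main}, which itself rests on this Proposition. The paper avoids this entirely by keeping $X$ non-centered and absorbing $\norm{\E X}_{2(d-1)}$ by the triangle inequality inside the estimate for $\E\norm{X}_{2(d-1)}$. Second, once you truncate at a $t$-dependent level $a$, you must control $\abs{\M F(X)-\M F_a(X)}$ uniformly in $a$; the paper does this by exploiting the monotonicity in $a$ of the minimal convex extensions $F_a$ from Lemma \ref{T:extension}, a point your sketch does not address.
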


\medskip

The essential idea in the proof of this concentration inequality is of
course to apply the CCP to the functional $A \mapsto \tr A^d$, but
there are two obvious difficulties with this approach.  One is that
this functional is not convex if $d$ is odd and $d\ge 3$, and the
convexity is not entirely trivial when $d$ is even; this technicality
is readily dealt with by using a classical convexity lemma and (in the
odd case) a simple decomposition trick.  The second, more fundamental
problem is that when $d\ge 2$ this functional is not Lipschitz (in
fact, not even uniformly continuous). However, it is \emph{locally}
Lipschitz in a way which is readily quantified, so that a variation of
standard truncation arguments can be applied. Extra care is needed
here to show that the truncation procedure can be made to preserve the
convexity of the functional and its Lipschitz constant, and to control
the effect of the truncation on the median.  The following folklore
result will be helpful.

\medskip

\begin{lemma}\label{T:extension} 
  Let $V$ be finite-dimensional normed space, $K \subseteq V$ an open
  convex set, and $F: K \to \R$ a convex Lipschitz function. Then
  there exists a function $\widetilde{F} : V \to \R$ such that
  \begin{itemize} 
  \item $\widetilde{F}$ is convex and $\widetilde{F}\vert_{K}= F$ (i.e.,
    $\widetilde{F}$ is a convex extension of $F$);
  \item $\widetilde{F}$ is pointwise minimal among all convex extensions of
    $F$; and 
  \item $\widetilde{F}$ is Lipschitz, and its Lipschitz constant is the same as
    that of $F$.
  \end{itemize} 
\end{lemma}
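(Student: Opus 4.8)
{\bf Proof strategy.}
The plan is to construct $\widetilde F$ explicitly as the pointwise supremum of all affine functions that lie below $F$ on $K$, i.e. $\widetilde F(x) = \sup \{\ell(x) : \ell \text{ affine}, \ell \le F \text{ on } K\}$. Such a supremum is automatically convex (a pointwise sup of affine functions), and it is $\le F$ on $K$ by construction; the heart of the matter is to show it equals $F$ on $K$ and that the supremum is finite everywhere (so that $\widetilde F$ is real-valued) with the right Lipschitz constant. Since $K$ is open and $F$ is convex on $K$, at each point $x_0 \in K$ there is a supporting affine function $\ell_{x_0}$ with $\ell_{x_0}(x_0) = F(x_0)$ and $\ell_{x_0} \le F$ on $K$ (the existence of such a subgradient is standard for convex functions on open sets in finite dimensions); this shows $\widetilde F = F$ on $K$. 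Minimality is immediate: any convex extension $G$ of $F$ has, at each point, a supporting affine function $\ell \le G$ everywhere, and since $\ell \le G = F$ on $K$, such $\ell$ is among the functions defining $\widetilde F$, so $\widetilde F \le G$.

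\emph{Lipschitz control.} Let $L$ be the Lipschitz constant of $F$ on $K$. The key point is that every supporting affine function $\ell_{x_0}$ as above automatically has $\|\ell_{x_0}\|_{\mathrm{Lip}} \le L$: indeed if $\ell_{x_0}(x) = F(x_0) + \langle g, x - x_0\rangle$ with $g$ a subgradient, then for $x_0 + h \in K$ with $\|h\|$ small we get $\langle g, h\rangle \le F(x_0 + h) - F(x_0) \le L\|h\|$, and taking $h$ in the direction realizing the dual norm of $g$ (scaled to stay in the open set $K$) yields $\|g\|_* \le L$. Hence $\widetilde F$, being a sup of $L$-Lipschitz affine functions, is $L$-Lipschitz wherever it is finite; and the uniform Lipschitz bound combined with boundedness at a single point of $K$ shows the sup is finite (indeed locally bounded) on all of $V$, so $\widetilde F : V \to \R$ is well-defined and $L$-Lipschitz. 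Since $\widetilde F$ restricts to $F$ on $K$, its Lipschitz constant is exactly $L$.

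\emph{The main obstacle.} The one place requiring genuine care is the existence of a global (i.e., valid on all of $K$, not just locally near $x_0$) supporting affine function at each interior point — this is where finite-dimensionality and openness of $K$ are used, via the supporting hyperplane theorem applied to the epigraph of $F$, whose interior is nonempty precisely because $K$ is open. Once that is in hand, everything else is bookkeeping. A cosmetic subtlety is that $\widetilde F$ need not agree with $F$ on $\partial K$ even when $F$ extends continuously there, but the statement only asks for agreement on the open set $K$, so this is not an issue. I would present the argument in this order: (1) define $\widetilde F$ as the sup of minorizing affine functions and note convexity; (2) produce supporting affine functions at interior points to get $\widetilde F|_K = F$; (3) derive the uniform Lipschitz bound on these affine functions, hence on $\widetilde F$, and deduce finiteness on all of $V$; (4) observe minimality from the subgradient characterization of any competing convex extension.
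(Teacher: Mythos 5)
There is a genuine gap, and it sits exactly where you locate ``the main obstacle'' as already resolved. You define $\widetilde F$ as the supremum of \emph{all} affine minorants of $F$ on $K$, and later treat $\widetilde F$ as ``a sup of $L$-Lipschitz affine functions.'' But only the \emph{supporting} affine functions (those arising from subgradients at points of $K$) are guaranteed to be $L$-Lipschitz; an arbitrary affine $\ell$ with $\ell \le F$ on $K$ can have arbitrarily large slope whenever $K \ne V$. Concretely, take $V=\R$, $K=(-1,1)$, $F\equiv 0$, so $L=0$: for every $M>0$ the function $\ell(x)=M(x-1)$ satisfies $\ell \le F$ on $K$, yet $\ell(2)=M$. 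Hence your supremum equals $+\infty$ off $[-1,1]$, so $\widetilde F$ is neither real-valued nor Lipschitz nor minimal (the minimal convex extension here is $\widetilde F \equiv 0$). Relatedly, your minimality argument runs backwards: showing that each supporting affine function of a competing convex extension $G$ appears among the functions in your supremum gives $\widetilde F \ge G$, not $\widetilde F \le G$.

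The repair is the paper's construction: set $\widetilde F(x) = \sup\bigl\{ F(y) + \phi(x-y) \ : \ y \in K,\ \phi \in \partial F(y) \bigr\}$, i.e., restrict the supremum to the supporting affine functions. Your subgradient estimate (which is correct) shows these are uniformly $L$-Lipschitz, so this supremum is finite on all of $V$, convex, $L$-Lipschitz, and equals $F$ on $K$ by the standard subdifferential representation of a convex function on an open set. Minimality then still needs a short argument you have not supplied: for any convex extension $G$, any $y\in K$, $\phi\in\partial F(y)$ and $x\in V$, the function $t\mapsto G(y+t(x-y))$ is convex, agrees with $F$ for small $t\ge 0$, and its difference quotients at $0$ are bounded below by $\phi(x-y)$ there; monotonicity of difference quotients gives $G(x)\ge F(y)+\phi(x-y)$, hence $G\ge \widetilde F$.
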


\begin{proof} 
  For $y \in K$, recall that (cf.\ \cite[Section 23]{Rockafellar})
  \[
  \partial F(y) = \left\{ \phi \in V^* \ \middle\vert \  
  F(x) \ge F(y) + \phi(x-y) \right\}
  \]
  is the subdifferential of $F$ at $x$ (nonempty because $F$ is
  convex), so that
  \begin{equation} \label{subdiff}
    F(x) = \sup \left\{ F(y) + \phi(x-y) \ \middle\vert \ 
      \phi \in \partial F(y), \ y \in K \right\}
  \end{equation}
  for every $x \in K$. Moreover, the Lipschitz constant of $F$ (on $K$) is
  \[
  \sup \left\{ \norm{\phi} \ \middle\vert \ \phi \in \partial F(y),\ y
    \in K \right\},
  \]
  (cf.\ \cite[Corollary 13.3.3]{Rockafellar}). This implies that the
  supremum in \eqref{subdiff} is finite also for $x \not\in K$ and
  thus defines an extension $\widetilde{F} : V \to \R$.  The assertions of
  the lemma follow easily from this definition.
\end{proof} 
 
\medskip 
 
\begin{proof}[Proof of Proposition \ref{T:Hermitian}] 
  The case $d=1$ is an immediate consequence of the CCP \eqref{E:ccp},
  so we will assume from now on that $d\ge 2$.

  \medskip

  Let $F:M_n^{sa} \to \R$ be given by 
  \[ 
  F(A) = \tr A^d = \sum_{i=1}^n \lambda_i(A)^d, 
  \]
  where $\lambda_i(A)$ are the eigenvalues of $A$ in, say, 
  nonincreasing order. A classical lemma of matrix analysis (see 
  e.g.\ \cite[Lemma 4.4.12]{AGZ}) states that a functional $A \mapsto 
  \tr \phi(A)$ is convex whenever $\phi : \R \to \R$ is convex; hence 
  in particular our $F$ is convex when $d$ is even.  If $d\ge 3$ is 
  odd, then we can write $F(A) = F^+(A) - F^-(A)$, where 
  \[
  F^\pm(A) = \sum_{i=1}^n \lambda_i(A)_\pm^d.
  \]
  Here $x_+ = \max\{0,x\}$ and $x_-= \max\{0,-x\}$. Since both the 
  functions $x\mapsto x_\pm^d$ are convex, $F^\pm:M_n^{sa}\to \R$ are 
  both convex. In the rest of this proof, for clarity of exposition, 
  we will proceed as if $d$ is even. The odd case is handled in the 
  same way by considering $F^+$ and $F^-$ separately, then deducing 
  the concentration of 
  \[
  F(X)-\E F(X) = \bigl(F^+(X)-\E F^+(X)\bigr) 
  - \bigl(F^-(X)-\E F^-(X)\bigr)
  \]
  from the concentration of each summand and the triangle inequality. 

 \medskip
  Let $f:\R^n \to \R$ be given by $f(x) = \sum_{i=1}^n x_i^d$. 
  Another classical lemma of matrix analysis (see e.g.\ \cite[Lemma 
    2.1.19 and Remark 2.1.20]{AGZ}) states that the map $A \mapsto 
  (\lambda_1(A), \dotsc, \lambda_n(A))$ is $1$-Lipschitz from 
  $M_n^{sa}$ with the Hilbert-Schmidt norm to $\R^n$ with the standard 
  Euclidean norm. The local Lipschitz behavior of $F$ can therefore be 
  controlled via the local Lipschitz behavior of $f$, for which we 
  compute 
  \[
  \abs{\nabla f(x)} = \sqrt{d^2 \sum_{i=1}^n x_i^{2(d-1)}}
 = d \norm{x}_{2(d-1)}^{d-1}.
  \]
 
  We now describe our truncation procedure.  For each $a>0$, we set 
  \[
  K_a = \{A\in M_n^{sa} \mid \norm{A}_{2(d-1)} < a\} ; 
  \]
  then  $F\vert_{K_a}$ is 
  $(d a^{d-1})$-Lipschitz.  At this point we appeal to Lemma 
  \ref{T:extension} to obtain convex   $(d a^{d-1})$-Lipschitz extensions  
  $F_a:M_n^{sa} \to \R$ to which the CCP applies.  
  Moreover, since $\{K_a\}$ is a nested family of open convex sets 
  whose union is $M_n^{sa}$, the minimality property from Lemma  
  \ref{T:extension} implies that, for each $A\in M_n^{sa}$, $F_a(A)$ 
  increases to $F(A)$ as $a \to \infty$. 
  
  \medskip

  The other necessary ingredient for the truncation-type argument is  
  an upper bound on the probability of the event that $X \notin  
  K_a$. For this, we begin with a standard discretization argument to  
  bound the operator norm of $(X-\E X)$. 
  [The argument is neither optimal (better constants are possible) 
  nor the quickest (for an expert in probability, appealing to comparison 
  theorems for subgaussian processes \cite{Ta3} would yield 
  the result much faster), but we include it for the sake of completeness.]  
  Let $\net$ be a  
  $\frac{1}{3}$-net in the unit sphere of $\C^n \cong \R^{2n}$ with  
  $\abs{\net}\le 7^{2n}$ (see \cite[Lemma 2.6]{MS} or \cite[Lemma  
    2]{Vershynin}), and for $A\in M_n^{sa}$ define  
  \[
  \norm{A}_{\net} = \sup_{v\in \net}\abs{\inprod{Av}{v}}.
  \]
  Then $\norm{A}_\infty \le 3 \norm{A}_\net$ by \cite[Lemma  
    4]{Vershynin}.  

  For each $u\in S^{n-1}$, $A\mapsto \abs{\inprod{Au}{u}}$ is a convex  
  and $1$-Lipschitz function $M_n^{sa}\to \R$, so by the CCP  
  \eqref{E:ccp},
  \begin{align*}
    \Prob\bigl[\norm{X-\E X}_\infty > t\bigr] 
    & \le \Prob\bigl[\norm{X 
        - \E X}_\net > t/3\bigr] \\
    &\le \sum_{v\in \net} \Prob\bigl[\abs{\inprod{(X-\E X)v}{v}} > t/3\bigr]
    \le C 7^{2n} e^{-ct^2}.
  \end{align*}  
  From this it follows that $\M \norm{X-\E X}_\infty \le C\sqrt{n}$.  
  Since $\norm{\cdot}_\infty \le \norm{\cdot}_2$, the CCP  
  \eqref{E:ccp} applies to the function $f(A) = \norm{A}_\infty$ and  
  so $\E \norm{X-\E X}_\infty \le C\sqrt{n}$ as well. (Alternatively,  
  this latter estimate follows by combining the inequality above with  
  integration by parts.)  
    
  \smallskip
  We also have the elementary estimate (a very weak consequence of CCP)
  \[
  \E \norm{X-\E X}_2 \le \sqrt{\E \norm{X-\E X}_2^2} = 
  \sqrt{\sum_{i,j} \E \abs{x_{ij}-\E x_{ij}}^2}
  \le Cn.
  \]
  From the above estimates and H\"older's inequality, we obtain that for $p\ge 2$,  
  \begin{align*}
    \E \norm{X-\E X}_p &\le \E \left(\norm{X-\E X}_2^{2/p} 
    \norm{X - \E X}_\infty^{1-2/p}\right) \\
    &\le \bigl(\E \norm{X-\E X}_2\bigr)^{2/p} 
    \bigl(\E \norm{X - \E X}_\infty\bigr)^{1-2/p} \\
    &\le C n^{2/p} n^{1/2-1/p} = Cn^{\frac{1}{p}+\frac{1}{2}}.
  \end{align*}
  Specifying  $p=2(d-1)$ yields  
  \[
  \E \norm{X}_{2(d-1)} \le \E \norm{X-\E X}_{2(d-1)} + \norm{\E X}_{2(d-1)}
  \le C n^{d/2(d-1)}.
  \]
  (It is here that our hypothesis for non-centered random matrices 
  enters into play, and where the form of the hypothesis is clarified.) 
  Now $\norm{\cdot}_{2(d-1)} \le \norm{\cdot}_2$, so the CCP  
  \eqref{E:ccp} applies to the function $f(A) =  
  \norm{A}_{2(d-1)}$. This implies finally that for $a=n^{d/2(d-1)}b$,  
  $b\ge C$,  
  \[
  \Prob\bigl[X \notin K_a\bigr] =
  \Prob\bigl[\norm{X}_{2(d-1)} \ge a \bigr]
    \le C\exp\bigl[ -c n^{d/(d-1)}
    b^2\bigr].
  \]

  \medskip

  We are now ready to carry out the argument to bound the tails of
  $(F(X)-\M F(X))$ by --- in particular --- appropriately choosing the
  truncation level $a$. Recall that $F_a:M_n^{sa} \to \R$ are the
  functions provided by Lemma \ref{T:extension}.  The monotonicity in
  $a$ of $F_a(A)$ implies that $\M F_a(X)$ increases in $a$ to $\M
  F(X)$. Letting $a=C_1 n^{d/2(d-1)}$ and applying the CCP
  \eqref{E:ccp} to $F_a$ we obtain
  \begin{align*}  
      \Prob\bigl[F(X) \ge \M F_a(X) + s\bigr] &=
      \Prob\bigl[\bigl(F_a(X) \ge \M F_a(X) + s\bigr) \mbox{ and } 
        \bigl(X \in K_a\bigr) \bigr] \\
      &\qquad +
      \Prob\bigl[\bigl(F(X) \ge \M F_a(X) + s\bigr) \mbox{ and } 
        \bigl(X \notin K_a \bigr)\bigr]\\
      &\le \Prob\bigl[F_a(X) \ge \M F_a(X) + s\bigr] 
      + \Prob\bigl[ X \notin K_a \bigr]\\
      &\le C \exp\left[-c \frac{s^2}{d^2 C_1^{2(d-1)}n^d}\right] 
      + C \exp\left[-cn^{d/(d-1)}C_1^2 \right].\\
  \end{align*}
  Therefore if $C_1$ is chosen large enough (independently of $n$ and
  $d$), then
  \[
  \Prob\bigl[F(X) \ge \M F_a(X) + C_2 d C_1^{d-1}n^{d/2}\bigr] < \frac{1}{2}
  \]
  for some $C_2 > 0$, and so $\M F(X) \le \M F_a(X) + d C_3^{d}
  n^{d/2}$. Since $\M F_a(X)$ increases monotonically with $a$, we  
  obtain  
  \[
  \abs{\M F(X) - \M F_a(X)} \le d C_3^{d} n^{d/2}
  \]
  for every $a\ge C_1 n^{d/2(d-1)}$.  (This is the point at which it  
  is most convenient to be working with the median instead of the  
  mean, since for a fixed $a$ the bound we get for $\Prob[\vert F(X) -  
    \M F_a(X)\vert \ge s]$ is not integrable.)  

  \medskip

  Now set $a = b n^{d/2(d-1)}$ with $b\ge C_1$. For $s\ge 2 d C_3^d n^{d/2}$,  
  by applying the CCP \eqref{E:ccp} to $F_a$ again,  
  \begin{align*}
      \Prob\bigl[\abs{F(X) - \M F(X)} \ge s\bigr] &=
      \Prob\bigl[\abs{F_a(X) - \M F(X)} \ge s\bigr) \mbox{ and } 
        \bigl(X \in K_a \bigr) \bigr] \\
      &\qquad +
      \Prob\bigl[\abs{F(X) - \M F(X)} \ge s\bigr) \mbox{ and } 
        \bigl(X \notin K_a \bigr)\bigr]\\
      &\le \Prob\bigl[\abs{F_a(X) - \M F_a(X)} \ge (s - d C_3^d n^{d/2})\bigr] 
      + \Prob\bigl[X \notin K_a\bigr]\\
      &\le C \exp\left[-c\frac{(s - d C_3^d n^{d/2})^2}{d^2a^{2(d-1)}}\right] 
      + C \exp\left[-cn^{d/(d-1)}b^2\right]\\
      &\le C \exp\left[-c \frac{s^2}{d^2 b^{2(d-1)} n^d}\right] 
      + C \exp\left[-cn^{d/(d-1)}b^2\right].\\
  \end{align*}

  If $s\le C_1^d d n^{d^2/2(d-1)}$ and $b=C_1$, then the first term in
  the last estimate dominates the second. If $s\ge C_1^d d
  n^{d^2/2(d-1)}$,  then setting $b = d^{-1} n^{-d/2(d-1)}s^{1/d}$ results in
  both exponents being of the same order, and we obtain
  \[
  \Prob\bigl[\abs{F(X) - \M F(X)} \ge s\bigr] \le C \exp\left[-
    \min\left\{\frac{s^2}{d^2 (C'n)^d},c s^{2/d}\right\}\right]
  \]
  for all $s\ge 2 d C_3^d n^{d/2}$. The inequality above is vacuously
  true (with appropriately chosen constants) if $s < 2 d C_3^d
  n^{d/2}$.  Finally, substituting $s=n^{d/2}t$ yields the bound in
  the statement of the proposition.
  \end{proof}

\medskip

Parts of the analysis of this section can be performed for functionals  
more general than traces of powers, e.g., $A \mapsto \tr \phi(A)$ for  
$\phi:\R \to \R$ a convex Lipschitz function as already considered in  
\cite{GZ}.  In an even less restrictive framework, by replacing the  
convexity lemma \cite[Lemma 4.4.12]{AGZ} used above and in \cite{GZ}  
with the more general result of \cite{Davis}, one can consider  
functionals of the form $A \mapsto  
f(\lambda_1(A),\dotsc,\lambda_n(A))$ for a symmetric, convex,  
Lipschitz function $f:\R^n \to \R$; see \cite[Corollary 8.23]{Ledoux}.  
  

\section{The general case: polarization and other tricks}

To deduce a version of Proposition \ref{T:Hermitian} for non-Hermitian  
matrices, we use the following polarization identity.  

\bigskip

\begin{lemma}\label{T:polarization}
For any $A,B\in M_n$,  
\[
A^d = \frac{1}{d+1}\sum_{j=0}^d\bigl(A + e^{2\pi i j/(d+1)} B\bigr)^d.
\]
In particular,  
\[
A^d = \frac{1}{d+1}\sum_{j=0}^d e^{\pi i j d/(d+1)} \bigl(e^{-\pi i
  j/(d+1)}A+e^{\pi i j /(d+1)} A^* \bigr)^d
\]
\end{lemma}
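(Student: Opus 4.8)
The plan is to establish the first identity by a ``roots of unity filter'' (discrete Fourier) argument applied to the fully expanded noncommutative power, and then to read off the second identity by factoring a scalar out of each summand.

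First I would write $\omega = e^{2\pi i/(d+1)}$, a primitive $(d+1)$-st root of unity, and expand $(A + \omega^j B)^d$ by distributing the product completely: it equals the sum of all $2^d$ words of length $d$ in the two noncommuting ``letters'' $A$ and $\omega^j B$. Since the scalar $\omega^j$ commutes with everything in $M_n$, each such word equals $\omega^{jk}$ times the corresponding word in $A$ and $B$, where $k$ is the number of letters equal to $B$ in that word. Grouping words by the value of $k$ gives
\[
(A + \omega^j B)^d = \sum_{k=0}^d \omega^{jk}\, S_k,
\]
where $S_k \in M_n$ is the sum of all length-$d$ words in the noncommuting letters $A$ and $B$ having exactly $k$ occurrences of $B$; in particular $S_k$ does not depend on $j$, and $S_0 = A^d$. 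Averaging this over $j = 0, \dots, d$ and interchanging the two (finite) sums yields $\frac{1}{d+1}\sum_{j=0}^d (A+\omega^j B)^d = \sum_{k=0}^d \bigl(\frac{1}{d+1}\sum_{j=0}^d \omega^{jk}\bigr) S_k$. The inner geometric sum equals $1$ if $(d+1) \mid k$ and $0$ otherwise; as $0 \le k \le d$, the only surviving term is $k = 0$, contributing $S_0 = A^d$. This proves the first displayed identity.

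For the ``in particular'' statement, I would specialize $B = A^*$ and factor $e^{\pi i j/(d+1)}$ out of each base, using $A + e^{2\pi i j/(d+1)} A^* = e^{\pi i j/(d+1)}\bigl(e^{-\pi i j/(d+1)} A + e^{\pi i j/(d+1)} A^*\bigr)$; raising to the $d$-th power pulls out the scalar $e^{\pi i j d/(d+1)}$, giving exactly the claimed formula. (The point of this rewriting --- exploited in the sequel --- is that $e^{-\pi i j/(d+1)} A + e^{\pi i j/(d+1)} A^*$ has the form $B_j + B_j^*$ with $B_j = e^{-\pi i j/(d+1)} A$, hence is Hermitian.) I do not expect a genuine obstacle here: the argument is elementary, and the only place requiring a little care is the bookkeeping in the noncommutative expansion --- one must group words purely by the \emph{number} of factors equal to $B$, regardless of their positions, which is legitimate precisely because $\omega^j$ is a scalar and so factors cleanly out of each word.
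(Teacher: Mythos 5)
Your proposal is correct and follows essentially the same route as the paper: the roots-of-unity filter you apply is exactly the $k=0$ case of inverting the Fourier matrix relation $(A+\omega^j B)^d=\sum_k \omega^{jk}M_k$ used there, with your $S_k$ being the paper's $M_k$. Your explicit derivation of the ``in particular'' statement (specializing $B=A^*$ and factoring out the scalar) is a welcome detail the paper leaves implicit.
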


\begin{proof}  
Expanding the sum, there are matrices $M_k$, $k=0,\dotsc, d$ with  
$M_0=A^d$ such that  
\[
\bigl(A+e^{2\pi i j/ (d+1)}B\bigr)^d = \sum_{k=0}^d e^{2\pi i j k/(d+1)}M_k.
\]
The $(d+1)\times (d+1)$ Fourier matrix $\left[\frac{1}{\sqrt{d+1}}  
e^{2\pi i j k/(d+1)}\right]_{j,k=0}^d$ is unitary, so inverting the  
above relations yields  
\[
M_k = \frac{1}{d+1} \sum_{j=0}^d e^{-2\pi i j k/(d+1)} \bigl(A+e^{2\pi
  i j/(d+1)} B \bigr)^d.
\]
The lemma is the case $k=0$ of this identity.  
\end{proof}  

\bigskip

\begin{cor}\label{T:nonhermitian}  
  Let $X\in M_n$ be a random matrix which satisfies the convex  
  concentration property (with respect to the Hilbert-Schmidt norm on  
  $M_n$), let $d\ge 1$ be an integer, and suppose --- when $d\ge 2$ ---  
  that $\norm{\E X}_{2(d-1)}\le cn^{d/2(d-1)}$.  Then for $t>0$,  
\[
\Prob \left[ \abs{\tr \left(\frac{X}{\sqrt{n}}\right)^d - 
  \E \tr \left(\frac{X}{\sqrt{n}}\right)^d} \ge t \right] 
  \le C (d+1) \exp \bigl[-
  \min \bigl\{c^d t^2,c n t^{2/d}\bigr\}\bigr].
\]
\end{cor}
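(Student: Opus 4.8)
The plan is to reduce Corollary~\ref{T:nonhermitian} to the already-proved Hermitian case (Proposition~\ref{T:Hermitian}) by means of the polarization identity in Lemma~\ref{T:polarization}. Write $Y = X/\sqrt n$ and apply the second displayed identity of Lemma~\ref{T:polarization} with $A = Y$:
\[
Y^d = \frac{1}{d+1}\sum_{j=0}^d e^{\pi i j d/(d+1)} H_j^d,
\qquad
H_j := e^{-\pi i j/(d+1)} Y + e^{\pi i j/(d+1)} Y^*.
\]
Each $H_j$ is Hermitian, and taking traces gives
\[
\tr\!\left(\frac{X}{\sqrt n}\right)^{\!d}
= \frac{1}{d+1}\sum_{j=0}^d e^{\pi i j d/(d+1)}\,\tr(\sqrt n\, H_j)^d \cdot n^{-d/2}
= \frac{1}{d+1}\sum_{j=0}^d e^{\pi i j d/(d+1)}\,\tr\!\left(\frac{\sqrt n\,H_j}{\sqrt n}\right)^{\!d}.
\]
So $Z := \tr(X/\sqrt n)^d$ is a fixed $\C$-linear combination, with coefficients of modulus $1/(d+1)$, of the $d+1$ random variables $W_j := \tr(\widetilde X_j/\sqrt n)^d$ where $\widetilde X_j := \sqrt n\, H_j = e^{-\pi i j/(d+1)} X + e^{\pi i j/(d+1)} X^*$.

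The key point is that each $\widetilde X_j$ is a random \emph{Hermitian} matrix of the form $T_j(X)$ for a fixed real-linear map $T_j : M_n \to M_n^{sa}$, and such maps are $C$-Lipschitz from the Hilbert--Schmidt norm on $M_n$ to that on $M_n^{sa}$ (indeed $\norm{\widetilde X_j}_2 \le \norm{X}_2 + \norm{X^*}_2 = 2\norm{X}_2$, so $T_j$ is $2$-Lipschitz). Since composition of a convex $1$-Lipschitz $f : M_n^{sa}\to\R$ with a linear map of norm $\le L$ is convex and $L$-Lipschitz, the CCP hypothesis on $X$ (w.r.t.\ the Hilbert--Schmidt norm on $M_n$) transfers, with the same $\kappa$ up to an absolute constant factor and the same $K$, to each $\widetilde X_j$. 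Likewise the noncentered hypothesis $\norm{\E X}_{2(d-1)}\le c n^{d/2(d-1)}$ implies $\norm{\E \widetilde X_j}_{2(d-1)} \le 2\norm{\E X}_{2(d-1)} \le 2c\, n^{d/2(d-1)}$, which (after absorbing the factor $2$ into the constant, which is legitimate since Proposition~\ref{T:Hermitian} allows an arbitrary constant $C$ in that hypothesis) is exactly the hypothesis needed to invoke Proposition~\ref{T:Hermitian} for $\widetilde X_j$. Hence each $W_j$ satisfies
\[
\Prob\bigl[\abs{W_j - \M W_j}\ge s\bigr] \le C\exp\bigl[-\min\{c^d s^2, c n s^{2/d}\}\bigr].
\]

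It remains to assemble these $d+1$ scalar concentration bounds into one for $Z$. Since $Z = \frac{1}{d+1}\sum_j e^{\pi i j d/(d+1)} W_j$, if $\abs{W_j - \M W_j} < s$ for all $j$ then $\abs{Z - \zeta} \le s$ where $\zeta := \frac{1}{d+1}\sum_j e^{\pi i j d/(d+1)} \M W_j$ is a fixed constant; a union bound therefore gives $\Prob[\abs{Z-\zeta}\ge s]\le C(d+1)\exp[-\min\{c^d s^2, c n s^{2/d}\}]$. Concentration about the constant $\zeta$ is equivalent to concentration about $\M Z$ (or about $\E Z$, which is finite because $Z$ has subgaussian-type tails by the bound just obtained), at the cost of adjusting the numerical constants, exactly as recorded in the discussion following \eqref{E:ccp} and used throughout the paper (apply it separately to $\Re Z$ and $\Im Z$, noting $\abs{\zeta - \E Z}\le \abs{\Re\zeta - \E\Re Z} + \abs{\Im\zeta - \E\Im Z}$, each controlled by a constant times the relevant concentration scale). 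Replacing $s$ by $t$ throughout yields the stated inequality with the factor $d+1$ in front. I do not expect a genuine obstacle here; the only point requiring a little care is the bookkeeping on how the Lipschitz constant $2$ of the maps $T_j$ propagates through the $\min\{c^d t^2, c n t^{2/d}\}$ bound — it only changes the absolute constants $c$ (and the base of $c^d$) and is harmless — and making sure the reduction to concentration about $\E Z$ is done for real and imaginary parts, which is routine.
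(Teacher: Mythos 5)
Your proof is correct and follows essentially the same route as the paper: polarize via Lemma \ref{T:polarization}, observe that $X \mapsto e^{-i\theta}X + e^{i\theta}X^*$ is a $2$-Lipschitz (real-)linear map into $M_n^{sa}$ so that the CCP and the hypothesis on $\norm{\E X}_{2(d-1)}$ transfer to each $Y_{\theta_j}$, apply Proposition \ref{T:Hermitian} to each summand, and finish with a union bound. The only cosmetic difference is that the paper first replaces the median by the mean in the conclusion of Proposition \ref{T:Hermitian}, so that linearity of expectation makes the center of concentration exactly $\E Z$, whereas you center at a fixed combination of medians and then shift to $\E Z$ at the end; both versions of this bookkeeping are fine.
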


\begin{proof}  
Observe that for any $\theta \in \R$, $A\mapsto  
e^{-i\theta}A+e^{i\theta}A^*$ is a $2$-Lipschitz map $M_n\to  
M_n^{sa}$. Thus $Y_{\theta}=e^{-i\theta}X + e^{i\theta}X^*$ satisfies  
the hypotheses of Proposition \ref{T:Hermitian}.  As remarked earlier,  
in the conclusion of Proposition \ref{T:Hermitian}, the median may be  
replaced by the mean. Set  $\theta_j=\pi j /(d+1)$ for $j=0,1,\ldots ,d$. 
Then, by Lemma \ref{T:polarization},  $\left(\frac{X}{\sqrt{n}}\right)^d 
=  \frac{1}{d+1} \sum_{j=0}^d e^{id\theta_j/(d+1)} 
    \left(\frac{Y_{\theta_j}}{\sqrt{n}}\right)^d$
and  hence, by  Proposition \ref{T:Hermitian},   

\begin{align*}
\Prob \left[ \abs{\tr \left(\frac{X}{\sqrt{n}}\right)^d -
    \E \tr \left(\frac{X}{\sqrt{n}}\right)^d} \ge t \right]
&\le \Prob \left[ \frac{1}{d+1} \sum_{j=0}^d \abs{\tr
    \left(\frac{Y_{\theta_j}}{\sqrt{n}}\right)^d - \E \tr
    \left(\frac{Y_{\theta_j}}{\sqrt{n}}\right)^d} \ge t \right]
\\ &\le (d+1) \sup_{\theta\in \R} \Prob \left[ \abs{\tr
    \left(\frac{Y_{\theta}}{\sqrt{n}}\right)^d - \E \tr
    \left(\frac{Y_{\theta}}{\sqrt{n}}\right)^d} \ge t \right] \\
&\le C (d+1) \exp \bigl[-
  \min \bigl\{c^d t^2,c n t^{2/d}\big\}\bigr].\qedhere
\end{align*}

\end{proof}

\bigskip  

\begin{proof}[Proof of Theorem \ref{T:main}]  
By the triangle inequality, it suffices to consider the case when $P$  
is a noncommutative $*$-monomial. (Note that for fixed $m$ and $d$  
there are, up to scalar multiples, only finitely many distinct  
noncommutative $*$-monomials of degree at most $d$ in $m$ variables.)  
Write $P(x_1,\dotsc,x_m) = y_1 \dotsc y_d$, where each $y_j$ is equal  
to some $x_k$ or $x_l^*$, and then define  
\[
\mathcal{X}= \begin{bmatrix} 0 & Y_1 \\ & 0 & Y_2 \\
  &&\ddots & \ddots \\ &&& 0 & Y_{d-1} \\ Y_d & & & & 0 \end{bmatrix}.
\]
analogously. It is easy to verify that  
\[
\mathcal{X}^d = \begin{bmatrix} Y_1 Y_2 \dotsb Y_d  &&& 0 \\
 & Y_2 Y_3 \dotsb Y_d Y_1 \\ && \ddots \\ 0 &&& Y_d Y_1 \dotsb Y_{d-1}  
\end{bmatrix},  
\]
so that $\tr \mathcal{X}^d = d\tr P(X_1,\cdots,X_m)$. Furthermore,  
$\mathcal{X}$ satisfies the convex concentration property on $M_{dn}$,  
with constants that may now depend on $d$ (cf.\ \cite[Proposition  
  1.11]{Ledoux}). The theorem now follows by applying Corollary  
\ref{T:nonhermitian} to $\mathcal{X}$.  
\end{proof}  

\bigskip

\noindent
{\em Acknowledgements}: This research has been partially supported by
the authors' respective grants from the National Science Foundation
(USA).  Early versions of the results have been disseminated in
various venues since 2005. The authors thank W.\ Bryc and
G.\ Kuperberg for inspiring conversations.  The second-named author
thanks Institut Mittag-Leffler, where he was in residence while the
final version of this paper was being written.

\bigskip

\bibliographystyle{plain} 
\bibliography{cnprm-pams}

\end{document}